\def\figurename{Figure} 
\renewcommand{\fnum@figure}[1]{\figurename~\thefigure.}
\def\tablename{Table} 
\renewcommand{\fnum@table}[1]{\tablename~\thetable.}
\newtheorem{theorem}{Theorem}[section]
\newtheorem{lemma}[theorem]{Lemma}
\theoremstyle{definition}
\newtheorem{definition}[theorem]{Definition}
\theoremstyle{remark}
\newtheorem{remark}[theorem]{Remark}
\numberwithin{equation}{section}
\def\R{\mathbb R}
\def\E{\mathbb E}
\def\E{\mathbb E}
\begin{document}
\title{\bfseries\scshape{Generalized backward doubly
stochastic differential equations driven by Lévy processes with non-Lipschitz coefficients \thanks{This works is partially support by Fellowship grant of AMMSI}}}
\author{\bfseries\scshape Auguste Aman\thanks{E-mail address: augusteaman5@yahoo.fr (Corresponding author)}\, and Jeam Marc Owo\thanks{E-mail address: owojmarc@hotmail.com}\\
UFR de Math\'{e}matiques et Informatique,\\ Universit\'{e} de Cocody, C\^{o}te d'Ivoire\\22 BP 582 Abidjan 22}

\date{}
\maketitle \thispagestyle{empty} \setcounter{page}{1}


\begin{abstract}
We study the homogenization problem of semilinear reflected partial
We prove an existence and uniqueness result for generalized backward doubly
stochastic differential equations driven by Lévy processes with non-Lipschitz assumptions.
\end{abstract}

\noindent {\bf AMS Subject Classification:}{Primary: 60F05, 60H15; Secondary: 60J30}

\vspace{.08in} \noindent \textbf{Keywords}: Backward doubly stochastic
differential equations; Lévy processes; non-Lipschitz coefficients, Teugel martingale

\section{Introduction}
Nonlinear backward stochastic differential equations (BSDEs in
short) have been introduced by Pardoux and Peng \cite{PP1}. The
original motivation for the study of this kind of equations was to
provide probabilistic interpretation for solutions of both
parabolic and elliptic semi linear partial differential equations
(see Pardoux and Peng \cite{PP2}, Peng  \cite{Pg}). Thanks to its link with the finance \cite{ElKaroui}, the stochastic control and stochastic game theory (see \cite{Hamadene} and references therein),  the theory of BSDEs quickly tooks a real enthusiasm since 1990.\\
Moreover, in order to give a probabilistic representation for a class of quasilinear stochastic partial differential
equations (SPDEs in short), Pardoux and Peng \cite{PardPeng}
considered a new kind of BSDEs, called backward doubly stochastic differential equations (BDSDEs
in short). There exist two different direction of stochastic integral driven respectively by two independent Brownian motion. The first integral is the well-know backward Itô integral and the second, the forward one. Following it, Bally and Matoussi \cite{BMat} gave the probabilistic
representation of the weak solutions to parabolic semilinear SPDEs in Sobolev spaces by
means of BDSDEs. Furthermore, Boufoussi et al. \cite{Boufsi} recommended a class of
generalized BDSDEs (GBDSDEs in short) which involved an other integral with
respect to an adapted continuous increasing process and gave the
probabilistic representation for stochastic viscosity solutions of semi-linear SPDEs with a
Neumann boundary condition. \\ Recently, Ren et al. \cite{Ren} showed the existence
and uniqueness of solutions to GBDSDEs driven by Teugels martingales associated
with Lévy process and gave probabilistic interpretation for solutions to a class of stochastic partial differential integral
equations (SPDIEs in short) with a nonlinear Neumann boundary condition. These results are obtained
with strong conditions on the coefficients, those are Lipschitz conditions
and monotony ones. Recently, N'zi and Owo \cite{MNJMO2} proved an existence and uniqueness
result of solutions for BDSDEs with non-Lipschitz conditions.

Inspired by this work, the aim of this paper is to extend the study of GBDSDEs driven by Lévy
processes introduced by in Ren et al. \cite{Ren}. We prove an existence and uniqueness
result in the non-Lipschitz case.

The rest of the paper is organized as follows. In section 2, we introduce some
preliminaries and notations. Section 3 is devoted to the proof of the existence
and uniqueness of the solutions to GBDSDEs driven by Lévy processes with non-Lipschitz coefficients.
\section{Preliminaries and notations}

Let $(\Omega, \mathcal{F},\mathbb{P})$ be a complete probability
space on which are defined all the processes considered and $T$ be
a
fixed final time.
\\
Let  $\{B_{t}; 0\leq t\leq T \}$ be a standard
Brownian motion, with values in $\mathbb{R}^{}$ and $%
\{L_{t}; 0\leq t\leq T \}$ be a $\mathbb{R}%
^{}$-valued Lévy process independent of $\{B_{t}; 0\leq t\leq T \}$ corresponding to a standard Lévy measure $\nu$ such that $\int_{\R}(1\wedge y)\nu(dy)<\infty$.
\\
Let $\mathcal{N}$ denote the class of $P$-null sets
of $\mathcal{F}$. For each $t \in [0,T]$, we define
\begin{equation*}
\mathcal{F}_{t}\overset{\Delta}{=}\mathcal{F}_{t}^{L} \vee \mathcal{F}%
_{t,T}^{B},
\end{equation*}
where for any process $\{\eta_{t}\}$ ; $\mathcal{F}_{s,t}^{\eta}=\sigma
\{\eta_{r}-\eta_{s}; s\leq r \leq t \} \vee \mathcal{N}$, $\mathcal{F}%
_{t}^{\eta}=\mathcal{F}_{0,t}^{\eta}$. \newline
Note that $\{\mathcal{F}_{t}^L, t\in [0,T]\}$ is an increasing filtration
and $\{\mathcal{F}_{t,T}^B, t\in [0,T]\}$ is a decreasing filtration, and
the collection $\{\mathcal{F}_{t}, t\in [0,T]\}$ is neither increasing nor
decreasing so that it does not constitute a filtration.
\\
Let $\ell^{2}$ denote
the set of real valued sequences $x=(x^{(i)})_{i\geq1}$ such that
$\displaystyle\|x\|^2=\sum_{i=1}^{\infty}|x^{(i)}|^2<\infty.$\\
We will denote by $\mathcal{M}^{2}(0,T,\ell^{2})$
the set of (class of $dP\otimes dt$ a.e. equal) $\ell^{2}$-valued process which satisfy
\begin{enumerate}
\item[(i)] $\|\varphi \|_{\mathcal{M}^{2}(\ell^{2})}^{2}=\mathbb{E}
\left(\int_{0}^{T}\|\varphi_{t}\|^2dt\right)< \infty.$

\item[(ii)] $\varphi_{t}$ is $\mathcal{F}_{t}$-measurable, for a.e. $t \in
[0,T].$
\end{enumerate}
Similarly, $\mathcal{S}^{2}(0,T)$
stands for the set of real valued random processes which satisfy:
\begin{enumerate}
\item[(i)] $\|\varphi \|_{\mathcal{S}^{2}}^{2}=\mathbb{E}\left(%
\underset{0\leq t\leq T}{\sup}|\varphi_{t}|^{2}\right)< \infty$

\item[(ii)] $\varphi_{t}$ is $\mathcal{F}_{t}$-measurable, for any $t \in
[0,T].$
\end{enumerate}
In the sequel, let $\{A_{t}; 0\leq t\leq T \}$
be a continuous and increasing real valued process such that
$A_{t}$ is $\mathcal{F}_{t}$-measurable, for any $t \in[0,T]$ and $A_0=0.$
\\
Let $\mathcal{A}^{2}(0,T)$ denote
the set of (class of $dP\otimes dA_t$ a.e. equal) real
valued measurable random processes $\{\varphi_{t}; 0\leq t\leq T \}$
such that $\displaystyle\mathbb{E}%
\left(\int_{0}^{T}|\varphi_{t}|^{2} dA_t\right)< \infty.$\\
 We will denote by
 $\mathcal{E}(0,T)=\big(\mathcal{S}^{2}(0,T)
 \cap\mathcal{A}^{2}(0,T)\big)\times\mathcal{M}^{2}(0,T,\ell^{2})$
the set of $\mathbb{R}%
 \times \mathrm{\ell}^{2}$-valued processes $(Y,Z)$
defined on $\Omega \times [0,T]$ which satisfy condition (ii) as above and such that
$$
\|(Y,Z)\|_{\mathcal{E}}^{2}=\mathbb{E}\left(%
\underset{0\leq t\leq T}{\sup} |Y_{t}|^{2}
+\int_{0}^{T}|Y_{s}|^{2} dA_s+
\int_{0}^{T}\|Z_{s}\|^{2} ds\right)< \infty.
$$
$\mathcal{E}(0,T)$
endowed with the norm $\|.\|_{\mathcal{E}}^{}$ is a Banach space.
\\
Let denote by $(H^{(i)})_{i\geq1}$ the Teugels Martingale
associated with the Lévy process  $\{L_{t}; 0\leq t\leq T \}$. More precisely
$$H_{t}^{(i)}=c_{i,i}T_{t}^{(i)}+c_{i,i-1}T_{t}^{(i-1)}+...+c_{i,1}T_{t}^{(1)},$$
where $T_{t}^{(i)}=L_{t}^{(i)}-\E(L_{t}^{(i)})=L_{t}^{(i)}-t\E(L_{1}^{(i)})$
for all $i\geq1$ and $L_{t}^{(i)}$  are power jump processes so that
$L_{t}^{(1)}=L_{t}$ and $\displaystyle L_{t}^{(i)}=\sum_{0<s\leq t}( \Delta L_{s})^i$
for $i\geq2$, with $\displaystyle L_{t^{-}}=\lim_{s\nearrow t}L_s$ and $\Delta L_{s}=  L_{s^{}}-L_{s^{-}}$. \\
Nualart and Schoutens have proved in \cite{NSc} that
the coefficients $c_{i,k}$ correspond to the orthonormalization
of the polynomials $1,\ x, \ x^2,\cdot\cdot\cdot$  with respect to the measure
$\mu(dx)=x^2\nu(dx)+\sigma^2\delta_0(dx):$
$$q_{i}(x)=c_{i,i}x^{i-1}+c_{i,i-1}x^{i-2}+\cdot\cdot\cdot+c_{i,1}.$$
The martingale $(H^{(i)})_{i\geq1}$ can be chosen to be
pairwise strongly orthonormal martingale. \\That is for all $i,j$,
\ $\displaystyle\langle H^{(i)},H^{(j)}\rangle_{t}=\delta_{ij}t$.
\begin{remark}
If $\mu$ only has mass at $1$, we are in the Poisson case $N_t$ with parameter $\lambda>0$; here $H_t^{(1)}=\frac{N_t-\lambda t}{\lambda}$
 and $H^{(i)}=0, \ \ i=2,\ 3,\cdot\cdot\cdot$. This case is degenerate in this Lévy framework.
\end{remark}
\begin{definition}
A pair $(Y,Z):\Omega \times [0,T]\rightarrow \mathbb{R} \times
\ell^{2}$  of processes is called solution \\of GBDSDE$(\xi, f,g, h, A)$
driven by Lévy processes if $(Y,Z)
\in \mathcal{E}(0,T)$ such that
\begin{eqnarray}\label{eq0}
Y_t&=&\xi+\int_{t}^{T}f(s,Y_{s^{-}},Z_s)ds+
\int_{t}^{T}h(s,Y_{s^{-}})dA_{s}+
\int_{t}^{T}g(s,Y_{s^{-}},Z_s)\overleftarrow{dB_{s}}\notag\\
&&-\sum_{i=1}^{\infty}\int_{t}^{T}Z_s^{(i)}dH_{s}^{(i)},\ \ t\in[0,T].
\end{eqnarray} Here the integral with respect to $\{B_{t}\}$ is
the classical backward Itô integral (see Kunita \cite{Kunita})
and the integral with respect to $\{(H_{t}^{(i)})_{i\geq 1}\}$
is a standard forward Itô-type semi martingale integral (see Gong \cite{Gg}).
\end{definition}
\noindent First, let us recall the extension of the well-known
Itô formula on which depend strongly our results. Its proof follows the same program as Lemma 2.5 in \cite{Boufsi} or Lemma 1.3 in \cite{PardPeng}.
\begin{lemma}
Let $\alpha,\beta$ and $\gamma$ in $\mathcal{S}^{2}(0,T)$, $\eta\in\mathcal{A}^{2}(0,T)$ and
 $\zeta\in\mathcal{M}^{2}(0,T,\ell^{2})$ such that
\begin{eqnarray*}\label{}
\alpha_t=\alpha_0+\int_{t}^{T}\beta_sds+
\int_{t}^{T}\eta_sdA_{s}+\int_{t}^{T}\gamma_sdB_{s}
-\sum_{i=1}^{\infty}\int_{t}^{T}\zeta_s^{(i)}dH_{s}^{(i)},\ \ t\in[0,T]\notag.
\end{eqnarray*}Then
\begin{eqnarray*}\label{}
|\alpha_t|^{2}&=&|\xi|^{2}+2\int_{t}^{T}\alpha_s\beta_sds+
2\int_{t}^{T}\alpha_s\eta_sdA_{s}+2\int_{t}^{T}\alpha_s\gamma_sdB_{s}\\
&&-2\sum_{i=1}^{\infty}\int_{t}^{T}\alpha_s\zeta_s^{i}dH_{s}^{(i)}
+\int_{t}^{T}|\gamma_s|^{2}ds
-\sum_{i,j=1}^{\infty}\int_{t}^{T}\zeta_s^{i}\zeta_s^{j}d[H_{s}^{(i)},H_{s}^{(j)}].
\end{eqnarray*}
Note that $\displaystyle\left(\int_{t}^{T}\alpha_s\gamma_sdB_{s}\right)_{0\leq t\leq T}$,
 $\displaystyle\left(\int_{0}^{t}\alpha_s\zeta_s^{(i)}dH_{s}^{(i)}\right)_{0\leq t\leq T}$
for all $i\geq1$ and
$\displaystyle\left(\int_{0}^{t}\zeta_s^{(i)}\zeta_s^{(j)}
d[H_{s}^{(i)},H_{s}^{(j)}]\right)_{0\leq t\leq T}$
for $i\not=j$ are uniformly integrable martingale and
$\displaystyle\langle H^{(i)},H^{(j)}\rangle_{t}=\delta_{ij}t$, we have
\begin{eqnarray*}\label{}
\E|\alpha_t|^{2}&=&\E|\alpha_0|^{2}+2\E\int_{t}^{T}\alpha_s\beta_sds+
2\E\int_{t}^{T}\alpha_s\eta_sdA_{s}+\E\int_{t}^{T}|\gamma_s|^{2}ds
\\
&&-\E\left(\int_{t}^{T}\sum_{i=1}^{\infty}|\zeta_s^{(i)}|^{2}ds\right),\ \ t\in[0,T]\notag.
\end{eqnarray*}
\end{lemma}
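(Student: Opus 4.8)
The plan is to follow the classical derivation of the It\^o formula in the doubly stochastic setting (as in Lemma~1.3 of \cite{PardPeng} and Lemma~2.5 of \cite{Boufsi}), organised in three stages: a reduction to simple integrands, a direct Riemann-sum computation, and a double passage to the limit. First I would reduce to the case where $\beta,\eta,\gamma,\zeta$ are bounded simple (piecewise constant) processes and the series $\sum_{i\ge1}$ is truncated at a finite level $N$. This is legitimate because such processes are dense in $\mathcal{S}^2(0,T)$, $\mathcal{A}^2(0,T)$ and $\mathcal{M}^2(0,T,\ell^2)$ respectively, and because membership in $\mathcal{M}^2(\ell^2)$ gives $\sum_{i\ge1}\E\int_0^T|\zeta_s^{(i)}|^2ds<\infty$; once the identity and its integrability are established for simple, truncated data and shown to be continuous in the relevant $L^2$-norms, the general statement follows.

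Second, on a partition $t=t_0<\cdots<t_n=T$ I would telescope $|\alpha_t|^2-|\alpha_T|^2=\sum_k(\alpha_{t_k}^2-\alpha_{t_{k+1}}^2)$, writing each summand as $(\alpha_{t_k}+\alpha_{t_{k+1}})(\alpha_{t_k}-\alpha_{t_{k+1}})$ and distributing it over the finite-variation, backward Brownian, and forward Teugels parts of the increment $\alpha_{t_k}-\alpha_{t_{k+1}}$ read off from the equation. The decisive point is the choice of evaluation endpoint forced by the two filtrations. For the forward Teugels part (adapted to the increasing $\mathcal{F}_t^L$) I would use $\alpha_{t_k}+\alpha_{t_{k+1}}=2\alpha_{t_k}-(\alpha_{t_k}-\alpha_{t_{k+1}})$, isolating the left-endpoint factor $\alpha_{t_k}$ and producing $-2\sum_i\int_t^T\alpha_s\zeta_s^{(i)}dH_s^{(i)}$ together with a remainder carrying a minus sign. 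For the backward It\^o part (adapted to the decreasing $\mathcal{F}_{t,T}^B$) I would instead use $\alpha_{t_k}+\alpha_{t_{k+1}}=2\alpha_{t_{k+1}}+(\alpha_{t_k}-\alpha_{t_{k+1}})$, isolating the right-endpoint factor and producing $2\int_t^T\alpha_s\gamma_s dB_s$ together with a remainder carrying a plus sign; the finite-variation part yields $2\int_t^T\alpha_s\beta_s ds+2\int_t^T\alpha_s\eta_s dA_s$ regardless of endpoint. Letting the mesh shrink, the remainders converge in $L^1$ to the brackets $+\int_t^T|\gamma_s|^2ds$ and $-\sum_{i,j}\int_t^T\zeta_s^i\zeta_s^j d[H^{(i)},H^{(j)}]$, with the mixed $B$--$H$ contribution vanishing because $B$ and $L$ are independent. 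This gives the first displayed identity.

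For the expectation formula I would take expectations in the pathwise identity. The stochastic integrals against $B$ and against the $H^{(i)}$ are uniformly integrable martingales, as recorded in the statement, so they drop out. For the bracket term I would use the strong orthonormality $\langle H^{(i)},H^{(j)}\rangle_t=\delta_{ij}t$: since $[H^{(i)},H^{(j)}]_t-\delta_{ij}t$ is a martingale, $\E\int_t^T\zeta_s^i\zeta_s^j d[H^{(i)},H^{(j)}]=\delta_{ij}\,\E\int_t^T\zeta_s^i\zeta_s^j ds$, so the double sum collapses to $-\E\int_t^T\sum_i|\zeta_s^{(i)}|^2ds$ and the expectation identity follows. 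Finally I would remove the truncation by letting $N\to\infty$, controlled by $\sum_{i\ge1}\E\int_0^T|\zeta_s^{(i)}|^2ds<\infty$, and replace the simple integrands by general ones, using the Burkholder--Davis--Gundy inequality to pass to the limit in the supremum norm of $\mathcal{S}^2$.

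The step I expect to be the main obstacle is the simultaneous treatment of the backward $B$-integral and the forward $H^{(i)}$-integral. A single off-the-shelf It\^o formula is not available here: the backward Brownian integral contributes its quadratic variation with a \emph{plus} sign while the forward Teugels integral contributes with a \emph{minus} sign, and this opposition is exactly what the two different endpoint expansions above encode. One must also verify that the mixed quadratic variation between the two genuinely vanishes (from the independence of $B$ and $L$) and that the infinite sum over $i$ may be interchanged with both the mesh limit and the expectation, which is the reason for the preliminary truncation together with the $\ell^2$-summability of $\zeta$.
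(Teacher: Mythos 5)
Your proposal is correct and is essentially the proof the paper intends: the paper offers no argument of its own, stating only that the proof ``follows the same program as Lemma 2.5 in \cite{Boufsi} or Lemma 1.3 in \cite{PardPeng}'', and that program is precisely your approximation-by-simple-processes and Riemann-sum argument with opposite endpoint conventions for the backward Brownian and forward Teugels integrals. Your sign bookkeeping (the $+\int_t^T|\gamma_s|^2ds$ from the backward bracket, the $-\sum_{i,j}\int_t^T\zeta_s^i\zeta_s^j\,d[H^{(i)},H^{(j)}]_s$ from the forward one, vanishing mixed bracket by independence of $B$ and $L$) and the use of $\langle H^{(i)},H^{(j)}\rangle_t=\delta_{ij}t$ to collapse the double sum under expectation match the stated identities exactly.
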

Next, let us recall the existence and uniqueness result on GBDSDE$(\xi, f,g, h, A)$
in the Lipschitz and monotony context. This work is due to Ren et al. \cite{Ren}.
They use the following assumptions:
\begin{description}

\item \textbf{(A1)}\ \ The terminal value $\xi \in \mathrm{L}^{2}(\Omega,
\mathcal{F}_{T}, \mathbb{P}, \mathbb{R})$ such that for all $\lambda>0$
$$\E (e^{\lambda A_T}|\xi|^2)<\infty.$$
\noindent \item \textbf{(A2)}\ \ The coefficients $f, g:\Omega \times [0,T]\times
\mathbb{R} \times \ell^{2}\rightarrow
\mathbb{R}$ and $h:\Omega \times [0,T]\times \mathbb{R}
\rightarrow \mathbb{R}$ satisfy, for some $\beta_1\in\R$,
$K>0$, $0 < \alpha < 1$ and $\beta_2<0$, three $\mathcal{F}_t$-adapted processes
$\{f_t, g_t, h_t:0\leq t\leq T\}$ with values in $[1,\infty[$
and for all $(t,y,z)\in [0,T]\times
\mathbb{R} \times \ell^{2}$, $\lambda >0$
\begin{itemize}
  \item [(i)] $f(.,y,z), g(.,y,z)$ and $h(.,y)$ are progressively measurable,
  \item [(ii)] $\left\{
             \begin{array}{ll}
               |f(t,y,z)|\leq f_t+ K(|y|+\|z\|) \vspace{0.1cm}& \hbox{} \\
               |g(t,y,z)|\leq g_t+ K(|y|+\|z\|) \vspace{0.1cm}& \hbox{} \\
               |h(t,y)|\leq h_t+K|y| & \hbox{}
             \end{array}
           \right.$
  \item [(iii)] $\displaystyle\E (\int_{0}^{T}e^{\lambda A_t}f_t^2dt+
  \int_{0}^{T}e^{\lambda A_t}g_t^2dt+\int_{0}^{T}e^{\lambda A_t}h_t^2dA_t)<\infty$
  \item [(iv)] $\langle y-y',f(t,y,z)-f(t,y',z)\rangle  \leq \beta_1\mid
y-y'\mid^{2}  $
  \item [(vi)] $\mid f(t,y,z)-f(t,y,z') \mid^{2} \leq K\| z-z'\|^{2}$
  \item [(vii)]$\langle y-y',h(t,y)-h(t,y')\rangle  \leq \beta_2\mid
y-y'\mid^{2}  $
  \item [(vii)] $\mid g(t,y,z)-g(t,y',z') \mid^{2} \leq K\mid
y-y'\mid^{2}+\alpha \|z-z'\|^{2}$
  \item [(viii)] $y\mapsto(f(t,y,z), g(t,y,z), h(t,y))$ is continuous for all $z$, $(\omega, t)$.
\end{itemize}

\noindent \item \textbf{(A3)}\ \ $\mid f(t,y,z)-f(t,y',z) \mid^{2}+
\mid h(t,y)-h(t,y') \mid^{2} \leq K\mid y-y' \mid^{2}$.
\end{description}
\begin{lemma}[Ren et al. \cite{Ren}]\label{lm0}
Under the assumptions \textbf{(A1)}, \textbf{(A2)} and \textbf{(A3)}, \\
the GBDSDE$(\xi, f,g, h, A)$ has a unique solution
\end{lemma}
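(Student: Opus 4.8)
\noindent The plan is to prove existence and uniqueness by a Banach fixed point (contraction) argument carried out in the Banach space $\mathcal{E}(0,T)$, re-normed with an equivalent exponentially weighted norm: a weight $e^{\lambda A_t}$ adapted to the increasing process $A$, together with a time factor $e^{\mu t}$ if needed to absorb the $ds$-terms. The assumptions \textbf{(A1)}--\textbf{(A3)} are tailored precisely so that this norm is finite and the contraction constant can be made strictly less than one.

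First I would treat the auxiliary linear problem obtained by freezing the coefficients. Given $(U,V)\in\mathcal{E}(0,T)$, put $\bar f_s=f(s,U_{s^-},V_s)$, $\bar h_s=h(s,U_{s^-})$ and $\bar g_s=g(s,U_{s^-},V_s)$; by the growth bound \textbf{(A2)}(ii)--(iii) together with \textbf{(A1)}, the processes $\bar f,\bar g$ are $dP\otimes dt$ square-integrable and $\bar h\in\mathcal{A}^{2}(0,T)$. I then solve
\[
Y_t=\xi+\int_t^T \bar f_s\,ds+\int_t^T \bar h_s\,dA_s+\int_t^T \bar g_s\,\overleftarrow{dB_s}-\sum_{i=1}^{\infty}\int_t^T Z_s^{(i)}\,dH_s^{(i)},
\]
whose solution $(Y,Z)\in\mathcal{E}(0,T)$ is produced from the martingale representation property of the filtration generated by $B$ and the Teugels martingales $(H^{(i)})_{i\ge1}$, exactly as in the linear case treated by Pardoux--Peng and Ren et al. This defines a map $\Phi:(U,V)\mapsto(Y,Z)$ on $\mathcal{E}(0,T)$.

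The core step is to show that $\Phi$ is a contraction. Writing $(Y,Z)=\Phi(U,V)$, $(\bar Y,\bar Z)=\Phi(\tilde U,\tilde V)$ and applying the extended It\^o formula recalled above to $e^{\lambda A_t}|Y_t-\bar Y_t|^2$, I would use the monotonicity conditions \textbf{(A2)}(iv) and (vii) (the latter with $\beta_2<0$) to control the $ds$- and $dA_s$-cross terms, the Lipschitz bounds \textbf{(A2)}(vi) and \textbf{(A3)} for the $y$- and $z$-dependence of $f$ and $h$, and the Lipschitz bound on $g$ with constant $\alpha<1$, which governs the backward It\^o contribution coming from $g(s,U_{s^-},V_s)-g(s,\tilde U_{s^-},\tilde V_s)$. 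After transferring the term $\E\int_t^T e^{\lambda A_s}\|Z_s-\bar Z_s\|^2\,ds$ to the left-hand side, the remaining right-hand side is dominated by a constant multiple of the weighted norm of $(U,V)-(\tilde U,\tilde V)$, the constant being made strictly less than one by taking $\lambda$ (and $\mu$) large enough. The Banach fixed point theorem then gives a unique $(Y,Z)\in\mathcal{E}(0,T)$ solving \eqref{eq0}.

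The main obstacle is the quantitative bookkeeping in this It\^o expansion. One must simultaneously absorb the $dA_s$-integral $\E\int_t^T e^{\lambda A_s}|Y_s-\bar Y_s|^2\,dA_s$, using the negative sign supplied by $\beta_2<0$ and the monotonicity of $A$, and leave enough room for the $z$-terms: since the backward It\^o integral feeds in a contribution with coefficient $\alpha$, the gap $1-\alpha>0$ surviving after $\|Z-\bar Z\|^2$ is moved to the left is exactly what prevents the doubly stochastic structure from destroying the contraction. Enlarging $\lambda$ handles the Lipschitz constant $K$ entering the $dA_s$- and $dt$-terms, but the restriction $\alpha<1$ is the delicate structural point; finally one must verify that the weighted norm is equivalent to $\|\cdot\|_{\mathcal{E}}$, so that the fixed point genuinely lies in $\mathcal{E}(0,T)$.
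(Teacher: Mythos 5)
Two preliminary remarks: this lemma is not proved in the paper at all --- it is quoted from Ren et al.\ \cite{Ren} --- so the only meaningful comparison is with the strategy of that cited work; and your overall plan (solve the decoupled linear equation by martingale representation in the filtration generated by $B$ and the Teugels martingales, then run a Banach fixed point in an exponentially weighted norm built from $e^{\lambda A_t}$ and $e^{\mu t}$) is indeed the standard route, essentially the one followed in \cite{Ren} and in Boufoussi et al.\ \cite{Boufsi} before them. However, there are two concrete flaws in your execution.

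First, your use of the monotonicity conditions is inconsistent with your freezing. You define $\Phi$ by freezing \emph{all} arguments, $\bar f_s=f(s,U_{s^-},V_s)$, $\bar h_s=h(s,U_{s^-})$, $\bar g_s=g(s,U_{s^-},V_s)$, so when you compare $\Phi(U,V)$ and $\Phi(\tilde U,\tilde V)$ the cross terms have the form $\langle Y_s-\bar Y_s,\,h(s,U_s)-h(s,\tilde U_s)\rangle$: the outer difference is in $(Y,\bar Y)$ while the coefficient difference is in $(U,\tilde U)$. Conditions \textbf{(A2)}(iv) and (vii) only compare the coefficient at the \emph{same} pair of points as the outer factor, so they simply cannot be applied here. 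What saves the argument is that \textbf{(A3)} gives Lipschitz continuity of $f$ and $h$ in $y$, so Young's inequality plus \textbf{(A3)}, \textbf{(A2)}(vi) and the $g$-bound with constant $\alpha<1$ carry the contraction; the monotonicity hypotheses play no role in this step and your appeal to them should be deleted, not repaired.

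Second, and more seriously, your final claim that the weighted norm is \emph{equivalent} to $\|\cdot\|_{\mathcal{E}}$ is false in the relevant generality: $A_T$ is an unbounded random variable in the intended applications (this is precisely why \textbf{(A1)} and \textbf{(A2)}(iii) demand $\mathbb{E}(e^{\lambda A_T}|\xi|^2)<\infty$ for \emph{every} $\lambda>0$ rather than being vacuous), so $e^{\lambda A_t}$ is not bounded above and the weighted space is a strict subspace of $\mathcal{E}(0,T)$. Since $e^{\lambda A_t}\ge 1$ the fixed point does lie in $\mathcal{E}(0,T)$, so existence survives; but your contraction only yields uniqueness \emph{within the weighted subspace}, whereas the lemma asserts (and the present paper uses, since its Picard iterates \eqref{eq3} are only known to lie in $\mathcal{E}^{2}(0,T)$) uniqueness among all $\mathcal{E}$-solutions. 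Closing this gap requires an extra step: either an a priori estimate showing that any solution in $\mathcal{E}(0,T)$ automatically has finite weighted norm (via localization along stopping times $\tau_n=\inf\{t:A_t\ge n\}\wedge T$), or a direct uniqueness argument applying It\^o's formula to $|Y_t-Y'_t|^2$ without weight, where the $dA_s$ cross term is discarded thanks to the sign $\beta_2<0$ in \textbf{(A2)}(vii) and the rest is Gronwall in $dt$. This last point is exactly where the monotonicity of $h$ with a \emph{negative} constant does genuine work --- the role you misassigned to it in the contraction step.
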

\section{Existence and uniqueness result in non-Lipschtz case}
In order to attain the solution of GBDSDE $(\xi, f,g, h, A)$,
we stand the following assumptions.
The coefficients $f, g:\Omega \times [0,T]\times
\mathbb{R} \times \ell^{2}\rightarrow
\mathbb{R}$, $h:\Omega \times [0,T]\times \mathbb{R}
\rightarrow \mathbb{R}$ and the terminal value $\xi$ satisfy:
\begin{description}

\item \textbf{(H1)}\ \ $f(.,y,z), g(.,y,z)$ and $h(.,y)$ are progressively
measurable such that

 $
   \begin{array}{ll}
 \displaystyle 0<\E \left(\int_{0}^{T}\left|f(s,0,0)\right|^2ds+\int_{0}^{T
}\left|h(s,0)\right|^2dA_s+\int_{0}^{T}
\left|g(s,0,0)\right|^2ds\right)<\infty. & \hbox{}
   \end{array}
 $

\item \textbf{(H2)}
For some $K>0$ and three $\mathcal{F}_t$-adapted processes
$\{f_t, g_t, h_t:0\leq t\leq T\}$ with values in $[1,\infty[$
and for all $(t,y,z)\in[0,T]\times
\mathbb{R} \times \ell^{2}$, $\lambda >0$

  $\displaystyle\left\{
             \begin{array}{ll}
               |f(t,y,z)|\leq f_t+ K(|y|+\|z\|) \vspace{0.2cm}& \hbox{} \\
               |g(t,y,z)|\leq g_t+ K(|y|+\|z\|) \vspace{0.2cm}& \hbox{} \\
               |h(t,y)|\leq h_t+K|y| \vspace{0.2cm}& \hbox{}\\
               \displaystyle\E \left(\int_{0}^{T}e^{\lambda A_t}f_t^2dt+
  \int_{0}^{T}e^{\lambda A_t}g_t^2dt+\int_{0}^{T}e^{\lambda A_t}h_t^2dA_t\right)<\infty & \hbox{}
             \end{array}
           \right.$

\noindent \item \textbf{(H3)}\ \ For some $\beta<0$ and for all
$y_{1},\ y_{2}\in \mathbb{R}%
$ and $t\in [0,T]$, $$\langle y_{1}-y_{2},h(t,y_{1})-h(t,y_{2})\rangle  \leq \beta\mid
y_{1}-y_{2}\mid^{2}  $$

\noindent \item \textbf{(H4)}\ \ For all $(y_{1},z_{1}),\ (y_{2},z_{2})\in \mathbb{R}%
 \times \mathrm{\ell}^{2}$ and $t\in [0,T]$,

$\left\{
\begin{array}{ll}
|h(t,y_{1})-h(t,y_{2})|  \leq K\mid
y_{1}-y_{2}\mid\\
\mid f(t,y_{1},z_{1})-f(t,y_{2},z_{2}) \mid^{2} \leq \rho(t,\mid
y_{1}-y_{2}\mid^{2})+C\|z_{1}-z_{2}\|^{2} \vspace{0.2cm} &
\\
\mid g(t,y_{1},z_{1})-g(t,y_{2},z_{2}) \mid^{2} \leq \rho(t,\mid
y_{1}-y_{2}\mid^{2})+\alpha \| z_{1}-z_{2}\|^{2} &
\end{array}
\right.,
$

where $C > 0$ and $0 < \alpha < 1$ are two constants and $\rho: [0,T]\times%
\mathbb{R}^+ \rightarrow \mathbb{R}^+$ satisfies:

\begin{itemize}
\item[(i)] for fixed $t\in[0,T]$, \ $\rho(t,.)$ is a concave
and non-decreasing function such that $\rho(t,0)=0.$

\item[(ii)] for fixed $u$, $\int_0^T\rho(t,u)dt<+\infty$.

\item[(iii)] for any $M>0$, the following ODE
\begin{equation*}
\left\{
\begin{array}{ccc}
u^{\prime } & = & -M\rho (t,u) \\
u(T) & = & 0%
\end{array}%
\right.
\end{equation*}%
has a unique solution $u(t)\equiv 0,\ \ t\in \lbrack 0,T]$.
\end{itemize}

\noindent \item \textbf{(H5)} \ \ $\xi \in \mathrm{L}^{2}(\Omega,
\mathcal{F}_{T}, \mathbb{P}, \mathbb{R})$ such that for all $\lambda>0$
$$\E (e^{\lambda A_T}|\xi|^2)<\infty.$$
\end{description}
\noindent Under above assumptions, we now construct an approximate sequence using a Picard-type iteration with the help of
Lemma \ref{lm0}. Let
$Y_t^0=0$ and $\left(
Y^n,Z^n\right)_{n\geq 1}$ be a sequence in $\mathcal{E}^{2}(0,T)$ defined recursively
by
\begin{eqnarray}  \label{eq3}
Y_{t}^{n}&=&\xi +\int_{t}^{T }f(s,Y_{s}^{n-1},Z_{s}^{n})ds+ \int_{t}^{T
}h(s,Y_{s}^{n})dA_s+ \int_{t}^{T
}g(s,Y_{s}^{n-1},Z_{s}^{n})\overleftarrow{dB_{s}}\notag\\
&&-\sum_{i=1}^{\infty}\int_{t}^{T}Z_s^{n(i)}dH_{s}^{(i)}.
\end{eqnarray}
Indeed, for each $n\geq 1$ and fixed $Y^{n-1}$ in $\mathcal{S}^{2}(0,T)$,
BDSDE \eqref{eq3} satisfies assumptions \textbf{(A1)}, \textbf{(A2)} and \textbf{(A3)}. So, by Lemma \ref{lm0}, the BDSDE \eqref{eq3} has a unique solution $\left(
Y^{n},Z^{n}\right)\in \mathcal{E}^{2}(0,T)$. \newline
Our purpose is to prove that the sequence $\left( Y^n,Z^n\right)_{n\geq 0}$
converges in $\mathcal{E}^{2}(0,T)$ to the unique solution of BDSDEs \eqref{eq0}. We
begin with some preliminary results.

\begin{lemma}
\label{l2}Let $\textbf{(H1)}$, $\textbf{(H3)}$ and $\textbf{(H4)}$ be satisfied.
Then for all $0\leq t\leq T$, $n, m\geq 1$, we have
\begin{eqnarray*}
{\mathbb{E}}\left| Y_{t}^{n+m}-Y_{t}^{n}\right| ^{2} \leq e^{\frac{CT}{%
1-\alpha}}\left(\frac{1-\alpha}{C}+1\right)\int_{t}^{T }\rho(s,{\mathbb{E}}%
\left| Y_{s}^{n+m-1}-Y_{s}^{n-1}\right| ^{2})ds.
\end{eqnarray*}
\end{lemma}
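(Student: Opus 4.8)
The plan is to apply the It\^o formula recalled above to the squared difference of two consecutive Picard iterates and then close the estimate by a backward Gronwall argument. First I would fix $n,m\geq 1$ and introduce the difference processes $\bar Y_t=Y_t^{n+m}-Y_t^n$, $\bar Z_t=Z_t^{n+m}-Z_t^n$, together with the lagged difference $\bar U_t=Y_t^{n+m-1}-Y_t^{n-1}$ that appears in the $y$-variable of $f$ and $g$. Subtracting the two instances of the recursion \eqref{eq3}, the pair $(\bar Y,\bar Z)$ solves a GBDSDE with zero terminal value whose coefficients are the increments $\Delta f_s=f(s,Y_s^{n+m-1},Z_s^{n+m})-f(s,Y_s^{n-1},Z_s^n)$, $\Delta h_s=h(s,Y_s^{n+m})-h(s,Y_s^n)$ (note $h$ is evaluated at the \emph{current} iterates) and $\Delta g_s=g(s,Y_s^{n+m-1},Z_s^{n+m})-g(s,Y_s^{n-1},Z_s^n)$. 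The It\^o formula with $\bar Y_T=0$, after taking expectations, yields the energy identity
\begin{equation*}
\E|\bar Y_t|^2+\E\int_t^T\|\bar Z_s\|^2ds
=2\E\int_t^T \bar Y_s\,\Delta f_s\,ds
+2\E\int_t^T \bar Y_s\,\Delta h_s\,dA_s
+\E\int_t^T|\Delta g_s|^2ds .
\end{equation*}

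The heart of the argument is the term-by-term control of the right-hand side. The $dA$-term is discarded using the monotonicity hypothesis \textbf{(H3)}: since $\Delta h$ uses the current iterates, $\bar Y_s\,\Delta h_s\leq\beta|\bar Y_s|^2\leq 0$, so it only helps. For the drift the crucial step is Young's inequality with the carefully chosen weight $\tfrac{C}{1-\alpha}$, namely $2\bar Y_s\Delta f_s\leq \tfrac{C}{1-\alpha}|\bar Y_s|^2+\tfrac{1-\alpha}{C}|\Delta f_s|^2$; inserting the $f$-estimate of \textbf{(H4)} bounds $\tfrac{1-\alpha}{C}|\Delta f_s|^2$ by $\tfrac{1-\alpha}{C}\rho(s,|\bar U_s|^2)+(1-\alpha)\|\bar Z_s\|^2$. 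The $g$-term, estimated by \textbf{(H4)} as $|\Delta g_s|^2\leq\rho(s,|\bar U_s|^2)+\alpha\|\bar Z_s\|^2$, supplies the remaining $\alpha\|\bar Z_s\|^2$. The whole point of the weight is that the two martingale-part coefficients add to $(1-\alpha)+\alpha=1$, so the $\|\bar Z\|^2$ mass on the right exactly cancels the one on the left. After this cancellation one is left with
\begin{equation*}
\E|\bar Y_t|^2\leq \frac{C}{1-\alpha}\int_t^T\E|\bar Y_s|^2ds
+\Big(\frac{1-\alpha}{C}+1\Big)\E\int_t^T\rho(s,|\bar U_s|^2)\,ds .
\end{equation*}

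Next I would push the expectation inside $\rho$: by Fubini together with the concavity of $\rho(s,\cdot)$ and Jensen's inequality, $\E\,\rho(s,|\bar U_s|^2)\leq\rho(s,\E|\bar U_s|^2)$, so the last term is dominated by $\big(\tfrac{1-\alpha}{C}+1\big)\int_t^T\rho(s,\E|\bar U_s|^2)\,ds=:\psi(t)$, a non-increasing function of $t$ vanishing at $T$ (since $\rho\geq 0$). Writing $\phi(t)=\E|\bar Y_t|^2$ and $a=\tfrac{C}{1-\alpha}$, the inequality reads $\phi(t)\leq a\int_t^T\phi(s)\,ds+\psi(t)$. The final step is a backward Gronwall estimate: with $u(t)=\int_t^T\phi(s)\,ds$ one has $u'(t)+au(t)\geq-\psi(t)$, and integrating the factor $e^{at}$ over $[t,T]$ with $u(T)=0$ gives $u(t)\leq e^{-at}\int_t^T e^{as}\psi(s)\,ds$; monotonicity of $\psi$ then yields $\phi(t)\leq au(t)+\psi(t)\leq e^{a(T-t)}\psi(t)\leq e^{aT}\psi(t)$, which is exactly the asserted bound.

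I expect the delicate points to be bookkeeping rather than conceptual. The first is getting the weight $\tfrac{C}{1-\alpha}$ exactly right: a different split would leave an uncontrolled multiple of $\|\bar Z\|^2$ on the right and the estimate would not close. The second is justifying the Jensen/Fubini exchange, which needs $s\mapsto\E|\bar U_s|^2$ finite — guaranteed since the iterates lie in $\mathcal E(0,T)$ — and $\rho(s,\cdot)$ concave with $\rho(s,0)=0$, both from \textbf{(H4)}(i). The monotonicity of $\psi$ required by the Gronwall step is immediate from the nonnegativity of $\rho$.
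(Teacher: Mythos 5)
Your proposal is correct and follows essentially the same route as the paper: It\^o's formula for the difference of iterates, discarding the $dA_s$-term via \textbf{(H3)}, Young's inequality with the weight $\theta=\tfrac{1-\alpha}{C}$ (so that the $\|\bar Z\|^2$ coefficients $(1-\alpha)+\alpha$ exactly cancel the left-hand side), the \textbf{(H4)} bounds on $f$ and $g$, then Jensen's inequality for the concave $\rho$ and a backward Gronwall argument. The only difference is that you spell out the Gronwall/Jensen step explicitly, which the paper merely invokes.
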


\begin{proof} In view of It\^o's formula, we have
\begin{eqnarray*}
&&{\E}\left|Y_{t}^{n+m}-Y_{t}^{n}\right|^{2}+{\E}\int_{t}^{T }\|Z_{s}^{n+m}-Z_{s}^{n}\|^{2}ds \\
&=&2{\E}\int_{t}^{T }\left\langle
Y_{s}^{n+m}-Y_{s}^{n},f(s,Y_{s}^{n+m-1},Z_{s}^{n+m})-f(s,Y_{s}^{n-1},Z_{s}^{n})\right\rangle
ds\\&&+2{\E}\int_{t}^{T }\left\langle
Y_{s}^{n+m}-Y_{s}^{n},h(s,Y_{s}^{n+m})-h(s,Y_{s}^{n})\right\rangle
dA_s\\&&+{\E}\int_{t}^{T }\left|
g(s,Y_{s}^{n+m-1},Z_{s}^{n+m})-g(s,Y_{s}^{n-1},Z_{s}^{n})\right|
^{2}ds .
\end{eqnarray*}
In view of { \bf (H3)}, and Young's inequality  $2ab\leq
\frac{1}{\theta}a^2+\theta b^2 $, for any $\theta>0$, we have
\begin{eqnarray*}\label{cz}
&&{\E}\left|Y_{t}^{n+m}-Y_{t}^{n}\right| ^{2}+
{\E}\int_{t}^{T }\|Z_{s}^{n+m}-Z_{s}^{n}\| ^{2}ds +2|\beta|{\E}\int_{t}^{T
}\left|Y_{s}^{n+m}-Y_{s}^{n}\right|^2 dA_s\notag\\
&\leq&\frac{1}{\theta}{\E}\int_{t}^{T
}\left|Y_{s}^{n+m}-Y_{s}^{n}\right|^2 ds+(\theta+1){\E}\int_{t}^{T
} \rho(s,\left|Y_{s}^{n+m-1}-Y_{s}^{n-1}\right|^2) ds\\&&+(\theta
C+\alpha){\E}\int_{t}^{T }\|Z_{s}^{n+m}-Z_{s}^{n}\|^2 ds\notag.
\end{eqnarray*}
Choosing $\theta=\frac{1-\alpha}{C}>0$, it follows from Gronwall's
inequality and Jensen's inequality that
\begin{eqnarray*}
{\E}\left| Y_{t}^{n+m}-Y_{t}^{n}\right| ^{2} \leq
e^{\frac{CT}{1-\alpha}}\left(\frac{1-\alpha}{C}+1\right)\int_{t}^{T
}\rho(s,{\E}\left| Y_{s}^{n+m-1}-Y_{s}^{n-1}\right| ^{2})ds.
\end{eqnarray*}
\end{proof}

\begin{lemma}
\label{l3} Let $\textbf{(H1)}$, $\textbf{(H3)}$ and $\textbf{(H4)}$ be
satisfied. Then, there exists $T_1 \in [0,T[$ and a constant
$M_1\geq 0$ such that for all $t\in[T_1, T]$, \ each $n\geq 1$,\
${\mathbb{E}}\left|Y_{t}^{n}\right| ^{2} \leq M_1$.
\end{lemma}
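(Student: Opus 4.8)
The plan is to turn the recursion \eqref{eq3} into a backward Gronwall-type integral inequality for $u_n(t):=\E\,|Y_t^n|^2$ and then to close an induction on $n$, but only on a short interval $[T_1,T]$ abutting the terminal time. Since each iterate $(Y^n,Z^n)\in\mathcal{E}^2(0,T)$ by Lemma \ref{lm0}, every $u_n$ is a finite, bounded function of $t$, so all the manipulations below are legitimate. First I would apply the extended It\^o formula recalled above to $|Y_t^n|^2$, obtaining an energy identity in which the boundary term is $\E\,|\xi|^2$, the $ds$-terms carry $2Y_s^n f(s,Y_s^{n-1},Z_s^n)$ and $|g(s,Y_s^{n-1},Z_s^n)|^2$, the $dA_s$-term carries $2Y_s^n h(s,Y_s^n)$, and $\E\int_t^T\|Z_s^n\|^2\,ds$ sits on the left. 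The delicate $dA_s$-term is handled by \textbf{(H3)}: writing $h(s,Y_s^n)=\big(h(s,Y_s^n)-h(s,0)\big)+h(s,0)$ and applying $2ab\le \delta a^2+\tfrac1\delta b^2$ with weight $\delta=|\beta|$, the monotone part dominates the cross term because $\beta<0$, so the whole $dA_s$-integral is bounded by $\tfrac1{|\beta|}\E\int_0^T|h(s,0)|^2\,dA_s<\infty$ (finite by \textbf{(H1)}). This is the key point: it eliminates the $dA_s$-integral of $|Y_s^n|^2$, which one could not otherwise absorb since $A$ is merely increasing.

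Next I would treat $f$ and $g$ through \textbf{(H4)}. Splitting $f(s,Y_s^{n-1},Z_s^n)=\big(f-f(s,0,0)\big)+f(s,0,0)$, applying $2ab\le\frac1\theta a^2+\theta b^2$ to the inner-product term, and splitting $|g|^2$ with a factor $(1+\epsilon)$, the bounds $|f-f(s,0,0)|^2\le\rho(s,|Y_s^{n-1}|^2)+C\|Z_s^n\|^2$ and $|g-g(s,0,0)|^2\le\rho(s,|Y_s^{n-1}|^2)+\alpha\|Z_s^n\|^2$ produce on the right a total coefficient $(1+\epsilon)(\theta C+\alpha)$ in front of $\E\int_t^T\|Z_s^n\|^2\,ds$. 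Because $\alpha<1$, I may fix $\theta$ with $\theta C+\alpha<1$ and then $\epsilon$ small enough that this coefficient is $\le 1$, so the $\|Z_s^n\|^2$ contribution is absorbed by the term on the left. Using Jensen's inequality (each $\rho(s,\cdot)$ is concave, hence $\E\,\rho(s,|Y_s^{n-1}|^2)\le\rho(s,u_{n-1}(s))$) this yields
$$u_n(t)\le \bar D+\frac1\theta\int_t^T u_n(s)\,ds+K_\rho\int_t^T\rho\big(s,u_{n-1}(s)\big)\,ds,$$
where $K_\rho=(1+\epsilon)(1+\theta)$ and $\bar D<\infty$ collects $\E\,|\xi|^2$ (finite by \textbf{(H5)}) together with the $f(\cdot,0,0),g(\cdot,0,0),h(\cdot,0)$ contributions (finite by \textbf{(H1)}).

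To close the induction I would linearize $\rho$: since $\rho(s,\cdot)$ is concave, non-decreasing and $\rho(s,0)=0$, one has $\rho(s,u)\le\phi(s)(1+u)$ with $\phi(s):=\rho(s,1)$, and $\phi\in L^1(0,T)$ by \textbf{(H4)}(ii). Substituting and invoking the inductive hypothesis $u_{n-1}\le M_1$ on $[T_1,T]$ gives, for $t\in[T_1,T]$,
$$u_n(t)\le \Big(\bar D'+K_\rho M_1\,\Phi_{T_1}\Big)+\frac1\theta\int_t^T u_n(s)\,ds,\qquad \Phi_{T_1}:=\int_{T_1}^T\phi(s)\,ds,$$
so backward Gronwall yields $u_n(t)\le\big(\bar D'+K_\rho M_1\Phi_{T_1}\big)e^{(T-T_1)/\theta}$. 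As $\phi\in L^1$ and $e^{(T-T_1)/\theta}\to1$ when $T_1\uparrow T$, I first fix $T_1<T$ so that $K_\rho\Phi_{T_1}\,e^{(T-T_1)/\theta}\le\tfrac12$, then set $M_1:=2\bar D'\,e^{(T-T_1)/\theta}$; the bound then reads $u_n(t)\le\tfrac12 M_1+\tfrac12 M_1=M_1$. With base case $u_0\equiv0$ (as $Y^0=0$), the induction gives $\E\,|Y_t^n|^2\le M_1$ for all $n\ge1$ and $t\in[T_1,T]$, as claimed.

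The main obstacle is the joint bookkeeping of the $\|Z^n\|^2$-energy and the cross-generational coupling $u_n\leftrightarrow u_{n-1}$ through the concave modulus $\rho$: one must spend exactly the margin $\alpha<1$ to absorb every $\|Z^n\|^2$-term while keeping the $\rho$-coefficient $K_\rho$ finite, and only after that can the smallness of $\int_{T_1}^T\rho(s,1)\,ds$ force the induction to close with a single constant $M_1$. On the full interval the accumulated $\rho$-mass need not be small, which is precisely why the estimate must be localized near the terminal time $T$.
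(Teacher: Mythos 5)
Your proof is correct, and for most of its length it coincides with the paper's own argument: It\^o's formula for $|Y_t^n|^2$, Young's inequality, \textbf{(H3)} used exactly as you use it to reduce the $dA_s$-term to $\frac{1}{|\beta|}\,{\E}\int_t^T|h(s,0)|^2dA_s$, the splittings of $f$ and $g$ around $(0,0)$ combined with \textbf{(H4)}, absorption of the $\|Z^n\|^2$-energy thanks to $\alpha<1$ (the paper chooses $\theta=\frac{1-\alpha}{2C+\alpha}$ so that the coefficient $1-2\theta C-(1+\theta)\alpha$ vanishes exactly, rather than your ``coefficient $\le 1$'' absorption), Jensen's inequality for the concave $\rho(s,\cdot)$, and backward Gronwall, which is precisely the paper's inequality \eqref{g1}. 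The genuine difference is in how the induction on $n$ is closed. The paper fixes the constant first, $M_1:=2\mu_0^1$, then invokes \textbf{(H4)}(ii) to choose $T_1$ with $\int_{T_1}^T\rho(s,M_1)\,ds\le\mu_0^1/M$, and closes via monotonicity of $\rho(s,\cdot)$: $u_{n-1}\le M_1$ on $[T_1,T]$ gives $u_n\le \mu_0^1+M\cdot\mu_0^1/M=M_1$ --- a step the paper does not even write out, deferring instead to N'zi--Owo \cite{MNJMO2}. You instead linearize the modulus, $\rho(s,u)\le\rho(s,1)(1+u)$ (valid since $\rho(s,\cdot)$ is concave, non-decreasing and vanishes at $0$), which decouples the two choices: $T_1$ is picked first from the smallness of $\int_{T_1}^T\rho(s,1)\,ds$, and $M_1$ is defined afterwards, the induction then closing by a purely linear computation. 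Your route is self-contained (no appeal to \cite{MNJMO2}) and uses $\rho$ only through the single integrable function $\rho(\cdot,1)$, which is arguably cleaner; the paper's route keeps $\rho$ nonlinear and produces the explicit constants $M$ of \eqref{M}, $\mu_t^1$ and $M_1=2\mu_0^1$, which are deliberately recycled in the proof of Theorem \ref{te} (the sequence $\phi_n$ and the iterated thresholds $T_p$, $M_p$ are built from exactly these quantities), so substituting your version of the lemma would require redoing that downstream bookkeeping with your constants --- harmless, but not free.
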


\begin{proof} In view of It\^o's formula, we have
\begin{eqnarray*}
&&{\E}\left|Y_{t}^{n}\right| ^{2}+{\E}\int_{t}^{T }\|Z_{s}^{n}\|^{2}ds \\
&=&{\E}\left|\xi\right| ^{2}+2{\E}\int_{t}^{T }\left\langle
Y_{s}^{n},f(s,Y_{s}^{n-1},Z_{s}^{n})\right\rangle
ds+2{\E}\int_{t}^{T }\left\langle
Y_{s}^{n},h(s,Y_{s}^{n})\right\rangle
dA_s\\&&+{\E}\int_{t}^{T }\left| g(s,Y_{s}^{n-1},Z_{s}^{n})\right|
^{2}ds .
\end{eqnarray*}
By virtue of  {\bf (H3)}, {\bf (H4)} and Young's inequality
$2ab\leq
\frac{1}{\theta}a^2+\theta b^2 ,$
 for any $\theta>0,$  we have
\begin{eqnarray*}
2\left\langle
Y_{s}^{n},f(s,Y_{s}^{n-1},Z_{s}^{n})\right\rangle\!&\leq&\!\!
\frac{1}{\theta}\left|Y_{s}^{n}\right|^2+\theta\left|f(s,Y_{s}^{n-1},Z_{s}^{n})\right|^2
\\\!&\leq&\!\!\!\frac{1}{\theta}\left|Y_{s}^{n}\right|^2\!+\!
2\theta\rho(s,\left|Y_{s}^{n-1}\right|^2)\!\!+\! 2\theta
C\|Z_{s}^{n}\|^2\!+\!2\theta\left|f(s,0,0)\right|^2,
\end{eqnarray*}
\begin{eqnarray*}
2\left\langle
Y_{s}^{n},h(s,Y_{s}^{n})\right\rangle\!&\leq&\!\!
2\beta\left|Y_{s}^{n}\right|^2+2\left\langle
Y_{s}^{n},h(s,0)\right\rangle
\\\!&\leq&\!\!\!-|\beta|\left|Y_{s}^{n}\right|^2\!+\!
\frac{1}{|\beta|}\left|h(s,0)\right|^2,\hspace{5cm}
\end{eqnarray*}
$$\left| g(s,Y_{s}^{n-1},Z_{s}^{n})\right|
^{2}\leq(1+\theta)\rho(s,\left|Y_{s}^{n-1}\right|^2)+
(1+\theta)\alpha\|Z_{s}^{n}\|^2+(1+\frac{1}{\theta})\left|g(s,0,0)\right|^2.$$
Therefore,
\begin{eqnarray*}
&&{\E}\left|Y_{t}^{n}\right| ^{2}+[1-2\theta
C-(1+\theta)\alpha]{\E}\int_{t}^{T
}\|Z_{s}^{n}\|^2 ds +|\beta|{\E}\int_{t}^{T
}\left|Y_{s}^{n}\right|^2 dA_s \\
&\leq&{\E}\left|\xi\right| ^{2}+\frac{1}{\theta}{\E}\int_{t}^{T
}\left|Y_{s}^{n}\right|^2 ds+(3\theta+1)\int_{t}^{T }
\rho(s,{\E}\left|Y_{s}^{n-1}\right|^2) ds\\&&+{\E}\int_{t}^{T
}[2\theta\left|f(s,0,0)\right|^2+
(1+\frac{1}{\theta})\left|g(s,0,0)\right|^2]ds+\frac{1}{|\beta|}{\E}\int_{t}^{T
}\left|h(s,0)\right|^2dA_s.
\end{eqnarray*}
We choose $\theta=\frac{1-\alpha}{2C+\alpha}>0$, then
\begin{eqnarray*}
{\E}\left|Y_{t}^{n}\right| ^{2} &\leq&{\E}\left|\xi\right|
^{2}\!+\!\frac{2C\!+\!\alpha}{1\!-\!\alpha}{\E}\int_{t}^{T
}\!\!\left|Y_{s}^{n}\right|^2
ds\!+\!\left(3\frac{1-\alpha}{2C\!+\!\alpha}\!+\!1\right)\int_{t}^{T
}\!\!
\rho(s,{\E}\left|Y_{s}^{n-1}\right|^2) ds\\
&&+{\E}\int_{t}^{T
}\left[\frac{2(1-\alpha)}{2C+\alpha}\left|f(s,0,0)\right|^2+
(\frac{1+2C}{1-\alpha})\left|g(s,0,0)\right|^2\right]ds\\&&+\frac{1}{|\beta|}{\E}\int_{t}^{T
}\left|h(s,0)\right|^2dA_s.
\end{eqnarray*}
Now, in view of Gronwall's inequality, we derive
\begin{eqnarray}\label{g1}
{\E}\left|Y_{t}^{n}\right| ^{2} &\leq&\mu_t^1+\left(3\frac{1-\alpha}{2C+\alpha}+1\right)e^{\frac{(2C+\alpha)T}{1-\alpha}}\int_{t}^{T }
\rho(s,{\E}\left|Y_{s}^{n-1}\right|^2) ds
\end{eqnarray}
where
\begin{eqnarray*}&&\mu_t^1=e^{\frac{(2C+\alpha)T}{1\!-\!\alpha}}\left({\E}\left|\xi\right|
^{2}+{\E}\int_{t}^{T
}\left[\frac{2(1-\alpha)}{2C\!+\!\alpha}\left|f(s,0,0)\right|^2\!+\!
(\frac{1+2C}{1-\alpha})\left|g(s,0,0)\right|^2\right]ds\right.\\&&\ \hspace{2cm}\ \ \ \ \ \ \ \ \ \ \ \ \ \left.+\frac{1}{|\beta|}{\E}\int_{t}^{T
}\left|h(s,0)\right|^2dA_s\right),
\end{eqnarray*}
Let
\begin{eqnarray}
\, M=\max\left\{\left(3\frac{1-\alpha}{2C+\alpha}+1\right)
e^{\frac{(2C+\alpha)T}{1-\alpha}},\left(\frac{1-\alpha}{C}+1\right)
e^{\frac{CT}{1-\alpha}}\right\}>0.\label{M}
\end{eqnarray} and
\begin{eqnarray*}
&&M_1=2\mu_0^1=2e^{\frac{(2C+\alpha)T}{1-\alpha}}\left({\E}\left|\xi\right|
^{2}+{\E}\int_{0}^{T
}\left[\frac{2(1-\alpha)}{2C+\alpha}\left|f(s,0,0)\right|^2\right.\right.\\
&&\ \ \ \ \ \ \ \ \ \ +\left.\left.
(\frac{1+2C}{1-\alpha})\left|g(s,0,0)\right|^2\right]ds+\frac{1}{|\beta|}{\E}\int_{0}^{T
}\left|h(s,0)\right|^2dA_s\right)\geq
0.\end{eqnarray*}
 By virtue of {\bf (H4)},
 $\displaystyle\int_{0}^{T }
\rho(s,M_1) ds<+\infty,$ so we can find $T_1 $ such that
$$\displaystyle\int_{T_1}^{T } \rho(s,M_1) ds\leq \frac{\mu_0^1}{M}.$$
Now, we complete the proof as in N'zi and Owo \cite{MNJMO2}.

\end{proof}
\noindent With the help of the above Lemmas, we can now prove
existence and uniqueness which is our main result.
\begin{theorem}
\label{te} Let \textbf{(H1)}-\textbf{(H5)} be satisfied.
Then the equation \eqref{eq0} has an unique solution $(Y,Z) \in \mathcal{E}%
^{2}(0,T)$.
\end{theorem}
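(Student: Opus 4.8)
The plan is to prove that the Picard iterates $(Y^n,Z^n)$ defined by \eqref{eq3} form a Cauchy sequence in $\mathcal{E}^{2}(0,T)$ whose limit solves \eqref{eq0}, working first on a terminal subinterval and then patching. On the interval $[T_1,T]$ furnished by Lemma \ref{l3}, the uniform a priori bound $\sup_{n}\E|Y_t^n|^2\le M_1$ holds, so the quantity $a_n(t):=\sup_{m\ge 1}\E|Y_t^{n+m}-Y_t^{n}|^2$ is well defined and bounded by $4M_1$ there. From Lemma \ref{l2} and the fact that $\rho(s,\cdot)$ is non-decreasing, one gets $a_n(t)\le M\int_t^{T}\rho(s,a_{n-1}(s))\,ds$, with $M$ as in \eqref{M}.

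Set $\psi(t):=\limsup_{n\to\infty}a_n(t)$. The crucial step is to pass to the $\limsup$ in this inequality. Since $\rho(s,a_{n-1}(s))\le\rho(s,4M_1)$, which is integrable on $[T_1,T]$ by \textbf{(H4)}(ii), the reverse Fatou lemma applies; and because $\rho(s,\cdot)$ is concave (hence continuous) and non-decreasing, one may interchange $\limsup$ with $\rho$ to obtain $\psi(t)\le M\int_t^{T}\rho(s,\psi(s))\,ds$. A Bihari-type comparison then forces $\psi\equiv 0$: any nonnegative function obeying such an integral inequality is dominated by the maximal solution of the ODE in \textbf{(H4)}(iii), which is identically zero. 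I expect this interchange-of-limits argument, rather than any single estimate, to be the conceptual crux of the proof.

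Having shown $a_n(t)\to 0$, i.e. $\sup_{m}\E|Y_t^{n+m}-Y_t^{n}|^2\to 0$ on $[T_1,T]$, I would upgrade this to convergence in $\mathcal{E}^{2}(T_1,T)$. Applying the It\^o formula of the Lemma above to $Y^{n+m}-Y^{n}$ controls $\E\int_{T_1}^{T}\|Z_s^{n+m}-Z_s^{n}\|^2\,ds$ by the same $\rho$-integral, and the Burkholder--Davis--Gundy inequality applied to the backward It\^o integral and to the Teugels-martingale integral controls $\E\sup_{T_1\le t\le T}|Y_t^{n+m}-Y_t^{n}|^2$; all three tend to $0$. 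Thus $(Y^n,Z^n)$ converges to some $(Y,Z)$ in $\mathcal{E}^{2}(T_1,T)$, and passing to the limit in \eqref{eq3}, using the growth bound \textbf{(H2)} and the continuity built into \textbf{(H4)} to identify the limits of $f$, $g$ and $h$, shows that $(Y,Z)$ solves \eqref{eq0} on $[T_1,T]$. Uniqueness on $[T_1,T]$ follows from the identical It\^o-plus-Gronwall computation: two solutions satisfy $\E|Y_t-Y_t'|^2\le M\int_t^{T}\rho(s,\E|Y_s-Y_s'|^2)\,ds$, whence $Y=Y'$ and then $Z=Z'$ by \textbf{(H4)}(iii).

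Finally I would extend the solution from $[T_1,T]$ to all of $[0,T]$ by backward induction, repeating the construction on successive subintervals $[T_{k+1},T_k]$ with terminal datum $Y_{T_k}$. The main obstacle here is to guarantee that the procedure terminates after finitely many steps, i.e. that the lengths $T_k-T_{k+1}$ do not shrink to zero. This is where care is needed: the length produced by Lemma \ref{l3} is controlled only through the a priori bound $M_1$ and the integral $\int\rho(s,M_1)\,ds$, so one must check that the successive a priori bounds stay under control (the terminal data satisfy $\E|Y_{T_k}|^2\le M_1$), which yields a uniform lower bound on the step size and hence a finite covering of $[0,T]$. Concatenating the solutions across the points $T_k$, together with the subinterval uniqueness, produces the unique global solution $(Y,Z)\in\mathcal{E}^{2}(0,T)$.
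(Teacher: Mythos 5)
Your overall architecture is the paper's own: Picard iteration via Lemma \ref{lm0}, the estimates of Lemmas \ref{l2} and \ref{l3}, a Cauchy argument in $\mathcal{E}^{2}(T_1,T)$, passage to the limit in \eqref{eq3}, backward patching over subintervals, and uniqueness by an ODE-comparison (Bihari-type) argument. Your handling of the Cauchy step --- setting $a_n(t)=\sup_{m}\E|Y_t^{n+m}-Y_t^n|^2$ and passing to $\limsup$ via reverse Fatou and concavity of $\rho$ --- is a legitimate variant of what the paper does; the paper instead dominates $\E|Y_t^{n+m}-Y_t^n|^2$ by the explicit recursive sequence $\phi_{n+1}(t)=M\int_t^T\rho(s,\phi_n(s))\,ds$, known from N'zi--Owo to decrease uniformly to $0$, which avoids the interchange-of-limits discussion entirely (and sidesteps the continuity-at-$0$ issue for $\rho$ that your $\limsup$ interchange quietly needs). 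The uniqueness computation is also the paper's, run there once on all of $[0,T]$ with an $e^{\theta t}$ weight rather than interval by interval; that difference is immaterial.

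The genuine gap is in your termination argument for the patching procedure. You claim the terminal data satisfy $\E|Y_{T_k}|^2\le M_1$ for every $k$ and deduce a uniform lower bound on the step lengths. This is false as stated: what the analogue of Lemma \ref{l3} on $[T_{k+1},T_k]$ gives is $\E|Y_{T_{k+1}}|^2\le M_{k+1}$, where $M_{k+1}=2e^{(2C+\alpha)T/(1-\alpha)}\bigl(\E|Y_{T_k}|^2+\cdots\bigr)$ feeds on the previous terminal datum; since the factor $2e^{(2C+\alpha)T/(1-\alpha)}$ exceeds $2$, the constants $M_k$ may grow geometrically and are not bounded by $M_1$. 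So neither a uniform bound on $\E|Y_{T_k}|^2$ nor a uniform step size follows from what you established. The correct argument --- the one the paper imports from N'zi--Owo through the constant $A$ it defines at the end of the existence proof --- needs no uniform bounds: every $M_p=2\mu_0^p$ satisfies $M_p\ge A$, and since $\rho(s,\cdot)$ is concave with $\rho(s,0)=0$, the map $u\mapsto\rho(s,u)/u$ is non-increasing, so the defining relation $\int_{T_p}^{T_{p-1}}\rho(s,M_p)\,ds=\mu_0^p/M=M_p/(2M)$ forces $\int_{T_p}^{T_{p-1}}\rho(s,A)\,ds\ge A/(2M)$ for every $p$. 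If the procedure never reached $0$, summing over $p$ would give $\int_0^T\rho(s,A)\,ds=\infty$, contradicting \textbf{(H4)}(ii); hence some finite $p$ has $T_p=0$, even though the step lengths $T_{p-1}-T_p$ themselves need not be bounded below. Your proposal needs this budget-type argument (or an equivalent one) in place of the uniform-bound claim.
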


\begin{proof}
{Existence.} For all $n\geq 1$, and $t\in [0,T]$, we let
$$\phi_0(t)=M\int_{t}^{T } \rho(s,M_1)ds \ \ \text{and}\ \
\phi_{n+1}(t)=M\int_{t}^{T } \rho(s,\phi_{n}(s)) ds.$$
N'zi and Owo proved in \cite{MNJMO2} that $(\phi_n(t))_{n\geq 0}$ is non-increasing and converges
uniformly to $0$ for all $t\in [T_1,T]$.
\\
In view of Lemmas \ref{l2} and \ref{l3}, we conclude as in \cite{MNJMO2} that
for all $t\in[T_1,T]$, $n,\ m \geq 1$,
\begin{eqnarray}\label{3i}{\E}\left|
Y_{t}^{n+m}-Y_{t}^{n}\right| ^{2} \leq\phi_{n-1}(t)\leq M_1.
\end{eqnarray}
Using It\^o's formula, we deduce from assumptions
{\bf (H3)}, {\bf (H4)} and Young's inequality
$2ab\leq \frac{1}{\theta}a^2+\theta b^2,\ \theta>0$, that for all $t\in [T_1,T]$
\begin{eqnarray*} &&\left|Y_{t}^{n+m}-Y_{t}^{n}\right|^{2}-(\theta
C+\alpha)\int_{t}^{T }\|Z_{s}^{n+m}-Z_{s}^{n}\| ^{2}ds+2|\beta|\int_{t}^{T
}\left|Y_{s}^{n+m}-Y_{s}^{n}\right|^2 dA_s \\
&\leq&\frac{1}{\theta}\int_{t}^{T
}\left|Y_{s}^{n+m}-Y_{s}^{n}\right|^2 ds+(\theta+1)\int_{t}^{T
} \rho(s,\left|Y_{s}^{n+m-1}-Y_{s}^{n-1}\right|^2) ds\\
&&+2\int_{t}^{T }\left\langle
Y_{s}^{n+m}-Y_{s}^{n},(g(s,Y_{s}^{n+m-1},Z_{s}^{n+m})-g(s,Y_{s}^{n-1},Z_{s}^{n}))
\overleftarrow{dB_{s}}\right\rangle\\
&&-2\sum_{i,j=1}^{\infty}\int_{t}^{T }\left\langle
Y_{s}^{n+m}-Y_{s}^{n},Z_{s}^{n+m(i)}-Z_{s}^{n(i)}
\right\rangle dH_s^{(i)}-
\sum_{i,j=1}^{\infty}\int_{t}^{T}Z_s^{i}Z_s^{j}d[H_{s}^{i},H_{s}^{j}].
\end{eqnarray*}
Note that $\displaystyle\left(\int_{t}^{T }\left\langle
Y_{s}^{n+m}-Y_{s}^{n},(g(s,Y_{s}^{n+m-1},Z_{s}^{n+m})-g(s,Y_{s}^{n-1},Z_{s}^{n}))
\overleftarrow{dB_{s}}\right\rangle\right)_{0\leq t\leq T}$,
\\
$\displaystyle\left(\int_{t}^{T }\left\langle
Y_{s}^{n+m}-Y_{s}^{n},Z_{s}^{n+m(i)}-Z_{s}^{n(i)}
\right\rangle dH_s^{(i)}\right)_{0\leq t\leq T}$
for all $i\geq1$ and
$\displaystyle\left(\int_{t}^{T}Z_s^{i}Z_s^{j}d[H_{s}^{i},H_{s}^{j}]\right)_{0\leq t\leq T}$
for $i\not=j$ are uniformly integrable martingale.\\
Therefore, taking expectation and Jensen inequality, we obtain from inequality \eqref{3i},
\begin{eqnarray*}
&&{\E}\left|Y_{t}^{n+m}-Y_{t}^{n}\right| ^{2}+(1-\theta
C-\alpha){\E}\int_{t}^{T }\|Z_{s}^{n+m}-Z_{s}^{n}\| ^{2}ds +2|\beta|{\E}\int_{t}^{T
}\left|Y_{s}^{n+m}-Y_{s}^{n}\right|^2 dA_s\\
&\leq&\frac{1}{\theta}{\E}\int_{t}^{T
}\left|Y_{s}^{n+m}-Y_{s}^{n}\right|^2 ds+(\theta+1)\int_{t}^{T
} \rho(s,{\E}\left|Y_{s}^{n+m-1}-Y_{s}^{n-1}\right|^2) ds\\&\leq&\frac{1}{\theta}\int_{t}^{T
}\phi_{n-1}(s) ds+\frac{\theta+1}{M}\phi_{n-1}(t).
\end{eqnarray*}
Thus, choosing $\theta=\frac{1-\alpha}{2C}$, we get
\begin{eqnarray*}
&&\sup_{T_1\leq t\leq T }\left({\E}\left|Y_{t}^{n+m}-Y_{t}^{n}\right| ^{2}\right)+\frac{1-\alpha}{2}{\E}\int_{T_1}^{T }\|Z_{s}^{n+m}-Z_{s}^{n}\| ^{2}ds +
2|\beta|{\E}\int_{T_1}^{T
}\left|Y_{s}^{n+m}-Y_{s}^{n}\right|^2 dA_s\ \ \ \ \ \ \ \\
&\leq&(\frac{T-T_1}{\theta}+\frac{\theta+1}{M})\phi_{n-1}(T_1).
\end{eqnarray*}
Now, in view of this inequality, we deduce by Burkhölder-Davis-Gundy's inequality that
\begin{eqnarray*}
{\E}\left(\sup_{T_1\leq t\leq T }\left|Y_{t}^{n+m}-Y_{t}^{n}\right| ^{2}\right)+
{\E}\int_{T_1}^{T }\|Z_{s}^{n+m}-Z_{s}^{n}\| ^{2}ds +
{\E}\int_{T_1}^{T
}\left|Y_{s}^{n+m}-Y_{s}^{n}\right|^2 dA_s
\leq K\phi_{n-1}(T_1),
\end{eqnarray*}where $K$ is positive constant depending on
$C$, $T_1$, $T$, $\alpha$,  $|\beta|$ and $M$.
\\
Since $\phi_{n}(t)\ \rightarrow \ 0$,\ for all $t\in[T_1,T]$, as
$n \ \rightarrow \ \infty$, it follows that $(Y^n,Z^n)$ is a Cauchy
sequence in $\mathcal{E}^{2}(T_1,T)$.  Now, set
$$Y=\underset{n\rightarrow +\infty}{\lim}Y^{n},
 \ \ \ Z=\underset{n\rightarrow +\infty}{\lim}Z^{n}.$$
Then, as $\mathcal{E}^{2}(T_1,T)$ is a Banach space, $(Y,Z)\in\mathcal{E}^{2}(T_1,T)$.
\\
Passing to the limit in \eqref{eq3}, we prove that
 $(Y,Z)$ satisfies the BDSDE \eqref{eq0} on $[T_1,T]$.
\\
If $T_1=0$, then we have proved the existence result.
If $T_1\neq 0$, we consider the following equation
\begin{eqnarray}\label{2e}
Y_t&=&Y_{T_1}+\int_{t}^{T_1}f(s,Y_{s^{-}},Z_s)ds+
\int_{t}^{T_1}h(s,Y_{s^{-}})dA_{s}+
\int_{t}^{T_1}g(s,Y_{s^{-}},Z_s)\overleftarrow{dB_{s}}\notag\\
&&-\sum_{i=1}^{\infty}\int_{t}^{T_1}Z_s^{(i)}dH_{s}^{(i)},\ \ t\in[0,T_1].
\end{eqnarray}
We construct the Picard approximate sequence of equation
\eqref{2e}, as in \eqref{eq3}. Using the same procedure as in the
proof of Lemmas \ref{l2} and Lemma \ref{l3}, for all
$t\in[T_1,T]$, $n,\ m \geq 1$, we establish that
\begin{eqnarray*}
{\E}\left| Y_{t}^{n+m}-Y_{t}^{n}\right| ^{2} \leq
e^{\frac{CT}{1-\alpha}}\left(\frac{1-\alpha}{C}+1\right)\int_{t}^{T_1
}\rho(s,{\E}\left| Y_{s}^{n+m-1}-Y_{s}^{n-1}\right| ^{2})ds,
\end{eqnarray*}
and
\begin{eqnarray*}\label{} {\E}\left|Y_{t}^{n}\right| ^{2}
&\leq&\mu_t^2+M\int_{t}^{T_1}
\rho(s,{\E}\left|Y_{s}^{n-1}\right|^2) ds
\end{eqnarray*}
where
\begin{eqnarray*}\mu_t^2&=&e^{\frac{(2C+\alpha)T}{1-\alpha}}\left({\E}\left|Y_{T_1}\right|
^{2}+{\E}\int_{t}^{T
}\left[\frac{2(1-\alpha)}{2C+\alpha}\left|f(s,0,0)\right|^2\right.\right.\\
&&\ \ \ \ \ \ \ \ \ \ \ \ \ \ \ \left.\left.+
\Big(\frac{1+2C}{1-\alpha}\Big)\left|g(s,0,0)\right|^2\right]ds+
\frac{1}{|\beta|}{\E}\int_{t}^{T}\left|h(s,0)\right|^2dA_s\right),
\end{eqnarray*}
Let
\begin{eqnarray*}M_2=2\mu_0^2&=&2e^{\frac{(2C+\alpha)T}{1-\alpha}}\left({\E}\left|Y_{T_1}\right|
^{2}+{\E}\int_{0}^{T
}\left[\frac{2(1-\alpha)}{2C+\alpha}\left|f(s,0,0)\right|^2
\right.\right.\\&& \ \ \ \ \ \ \ \ \ \ \ \ \ \ \ \ \ \left.\left.+
\Big(\frac{1+2C}{1-\alpha}\Big)\left|g(s,0,0)\right|^2\right]ds
+\frac{1}{|\beta|}{\E}\int_{0}^{T
}\left|h(s,0)\right|^2dA_s\right).
\end{eqnarray*}
We can also find $T_2\in[0,T_1[$ such that
\begin{eqnarray*}
{\E}\left|Y_{t}^{n}\right| ^{2} &\leq&M_2,\ \ n\geq 1, \ t\in
[T_2,T_1].
\end{eqnarray*}
Here $T_2=0$ or $T_2\in]0,T_1[$ such that $\displaystyle\int_{T_2}^{T_1 }
\rho(s,M_2) ds= \frac{\mu_0^2}{M}.$  As above, we prove the
existence of the solution to BDSDE \eqref{2e} on $[T_2,T_1].$ If
$T_2=0$, the proof of the existence is complete. Overwise, we
repeat the above procedures. Thus, we obtain a sequence $\{T_p,\
\mu_t^p,\ M_p,\ \ p\geq 1\}$ defined by
\begin{eqnarray*}
&&0\leq T_p< T_{p-1}<...<T_1<T_0=T,\\
&&\mu_t^p=e^{\frac{(2C+\alpha)T}{1-\alpha}}\left[{\E}\left|Y_{T_{p-1}}\right|
^{2}+{\E}\int_{t}^{T
}\left(\frac{2(1-\alpha)}{2C+\alpha}\left|f(s,0,0)\right|^2 \right.\right.\\
&&\left.\left.\ \ \ \ \ \ \ \ \ \ \ \ \ \ \  \ \ \ \ \ +
\left(\frac{1+2C}{1-\alpha}\right)\left|g(s,0,0)\right|^2\right)ds+
\frac{1}{|\beta|}{\E}\int_{t}^{T
}\left|h(s,0)\right|^2dA_s\right],\\
&&M_p=2\mu_0^p=2e^{\frac{(2C+\alpha)T}{1-\alpha}}\left[{\E}\left|Y_{T_{p-1}}\right|
^{2} +{\E}\int_{0}^{T
}\left(\frac{2(1-\alpha)}{2C+\alpha}\left|f(s,0,0)\right|^2\right.\right. \\
&&\left.\left.\ \ \ \ \ \ \ \ \ \ \ \ \ \ \  \ \ \ \ \  +
\left(\frac{1+2C}{1-\alpha}\right)\left|g(s,0,0)\right|^2\right)ds+
\frac{1}{|\beta|}{\E}\int_{0}^{T
}\left|h(s,0)\right|^2dA_s\right],\\
&&\text{\ \ and \ \ } \int_{T_{p}}^{T_{p-1} } \rho(s,M_p) ds=
\frac{\mu_0^p}{M}.
\end{eqnarray*}Therefore, by iteration, we deduce the existence of
a solution to BDSDE \eqref{eq0} on $[T_p,T].$\\
Finally, setting
$$A=2e^{\frac{(2C+\alpha)T}{1-\alpha}}\left[{\E}\int_{0}^{T
}\left(\frac{2(1-\alpha)}{2C+\alpha}\left|f(s,0,0)\right|^2+
\left(\frac{1+2C}{1-\alpha}\right)\left|g(s,0,0)\right|^2\right)ds+
\frac{1}{|\beta|}{\E}\int_{0}^{T
}\left|h(s,0)\right|^2dA_s\right]$$
and using the same argument as in \cite{MNJMO2}, we prove the existence of
a finite $p\geq 1$ such that $T_p=0$. Thus, we obtain
the existence of the solution on $[0,T].$
\\

\noindent {\it Uniqueness.} \ Let $\left(Y,Z\right) \ ,
\left(Y',Z'\right)\in \mathcal{S}^2([0,T];
{\R}^k)\times\mathcal{M}^2(0,T; {\R}^{k\times d})$ be two
solutions of  BDSDE \eqref{eq0}.\\ Let $\theta>0$. By virtue of
It\^o's formula, we have
\begin{eqnarray*}\label{}
&&{\E}|Y_t-Y'_t|^2e^{\theta
t}+\theta{\E}\int_{t}^{T}|Y_s-Y'_s|^2e^{\theta
s}ds+{\E}\int_{t}^{T}\|Z_s-Z'_s\|^2e^{\theta
s}ds\\&&=2{\E}\int_{t}^{T}\left\langle Y_s-Y'_s,
f(s,Y_s,Z_s)-f(s,Y'_s,Z'_s)\right\rangle e^{\theta s}ds+
2{\E}\int_{t}^{T}\left\langle Y_s-Y'_s,
h(s,Y_s)-h(s,Y'_s)\right\rangle e^{\theta s}dA_s\\&&+
{\E}\int_{t}^{T}|g(s,Y_s,Z_s)-g(s,Y'_s,Z'_s)|^2e^{\theta s}ds.
\end{eqnarray*}
By virtue of the assumption {\bf (H3)}, {\bf (H4)} and Young's inequality
$2ab\leq \frac{1}{\theta}a^2+\theta b^2 $, we derive
\begin{eqnarray*}\label{}
&&{\E}|Y_t-Y'_t|^2e^{\theta
t}+(1-\alpha-\frac{1}{\theta} C){\E}\int_{t}^{T}\|Z_s-Z'_s\|^2e^{\theta
s}ds+2|\beta|{\E}\int_{t}^{T}|Y_s-Y'_s|^2e^{\theta
s}dA_s\\&&\leq
\left(\frac{1}{\theta}+1\right){\E}\int_{t}^{T}\rho(s,|Y_s-Y'_s|^2)e^{\theta
s}ds.
\end{eqnarray*}
Choosing $\theta>\frac{C}{1-\alpha}$ and noting that $1\leq e^{\theta t}\leq e^{\theta
T}$, \ $\forall\ t\in[0,T]$, we get
\begin{eqnarray}\label{ST}
&&{\E}|Y_t-Y'_t|^2+(1-\alpha-\frac{1}{\theta} C){\E}\int_{t}^{T}\|Z_s-Z'_s\|^2ds+
2|\beta|{\E}\int_{t}^{T}|Y_s-Y'_s|^2dA_s\\&&\leq
\left(\frac{1}{\theta}+1\right)e^{\theta
T}{\E}\int_{t}^{T}\rho(s,|Y_s-Y'_s|^2)ds \notag.
\end{eqnarray} Therefore
\begin{eqnarray*}\label{}
{\E}|Y_t-Y'_t|^2\leq \left(\frac{1}{\theta}+1\right)e^{\theta
T}\int_{t}^{T}\rho(s,{\E}|Y_s-Y'_s|^2)ds.
\end{eqnarray*}
In view of the comparison Theorem for ODE, we have
\begin{eqnarray*}\label{}
{\E}|Y_t-Y'_t|^2\leq r(t), \ \ \forall\ t\in[0,T],
\end{eqnarray*}
where $r(t)$ is the maximum left shift solution of the following
equation:
$$\left\{
\begin{array}{ccc}
  u' & = & -(\frac{1}{\theta}+1)e^{\theta
T}\rho(t,u); \\
  u(T) & = & 0. \text{ \ \ \ \ \ \ \ \ \ \ \ \ \ \ \ \ \ \ \ \ \ \ \ \ \ \  } \\
\end{array}
\right.$$ By virtue of the assumption {\bf(H3)}, $r(t)=0$,
$t\in[0,T]$. Thus \ ${\E}|Y_t-Y'_t|^2=0$, \ $t\in[0,T]$, this
means $Y_t=Y'_t$, a.s.. It then follows from \eqref{ST} that
$Z_t=Z'_t$, a.s., for any $t\in[0,T]$.
\end{proof}


\begin{thebibliography}{99}
\bibitem{BMat} {V. Bally \and A. Matoussi}, {\it Weak solutions for SPDEs and backward doubly stochastic differential equations,} J. Theoret. Probab. 14 (2001), pp. 125--164.
\bibitem{Boufsi} {B. Boufoussi, J. van Casteren \and N. Mrhardy},  {\it Generalized Backward
doubly stochastic differential equations and SPDEs with nonlinear
Neumann boundary conditions,} Bernoulli 13 (2007), pp. 423-446.
\bibitem{ElKaroui} {N.El Karoui, S. Peng and M. C. Quenez},  {\it Backward stochastic
differential equation in finance,} Math. Finance 7 (1997), pp.1-71.
\bibitem{Gg} {G. Gong}, {\it An introduction of stochastic differential equations,} 2nd edition, Peking university of China, Peking 2000.
\bibitem{Ren} {L. Hu, A. Lin \and Y. Ren}, {\it Stochastic PDIEs and backward doubly stochastic differential equations driven by L\'{e}vy processes,} J. Comput.Appl. Math. 229 (2009), pp. 230-239.
\bibitem{Hamadene} {S. Hamadčne, ; J.-P. Lepeltier},  {\it Zero-sum stochastic differential games and backward equations,} Systems Control Lett 24 (1995), pp. 259-263.
\bibitem{Kunita} {H. Kunita},  {\it Stochastic flows and stochastic differential equations,} Cambridge Studies in Advanced Mathematics, 24 Cambridge University Press, Cambridge, 1990.
\bibitem{NSc} {D. Nualart \and W. Schoutens}, {\it Backward stochastic differential equations and Feynman-Kac formula for L\'{e}vy processes, with applications in finance,} Bernoulli 7 (2001), pp. 761-776.
\bibitem{MNJMO2} {M. N'zi \and J. M. Owo}, {\it Backward doubly stochastic differential equations with non-lipschitz coefficients,}  Random Oper. Stochastic Equations 16 (2008), pp. 307--324.
\bibitem{PP1} {E. Pardoux \and S. Peng}, {\it Adapted solution of backward
stochastic differential equations,} System Control Lett.  4 (1990), pp.55-61.
\bibitem{PP2} {E. Pardoux \and S. Peng},  {\it Backward stochastic differential equations and quasilinear parabolic partial differential equations,}  Stochastic partial differential equations and their applications (Charlotte, NC, 1991), 200--217, Lecture Notes in Control and Inform. Sci., 176, Springer, Berlin, 1992.
\bibitem{PardPeng}  {E.Pardoux \and S.Peng},  {\it Backward doubly stochastic differential equations and systems of quasilinear SPDEs,} Probab. Theory Related Fields 98 (1994), pp. 209-227.
\bibitem{PZ}  {E. Pardoux \and   S. Zhang}, {\it Generalized BSDEs and nonlinear Neumann boundary value problems,} Probab. Theory Related Fields 110 (1998),  pp. 535-558.
\bibitem{Pg} {S.Peng},  {\it Probabilistic interpretation for systems of quasilinear parabolic partial differential equations,} Stoch. Stoch. Rep. 37 (1991),  pp. 61-74.
\end{thebibliography}
\end{document}